\def\H{{\mathbb H}}
\def\C{{\mathbb C}}
\def\R{{\mathbb R}}
\newcommand{\diag}{\mathsf{diag} }
\newcommand{\eig}{\mathsf{eig}}
\newcommand{\per}{\mathsf{per}}
\newcommand{\sort}{\mathsf{sort}}
\newcommand{\scal}[1]{\langle#1\rangle}
\newtheorem*{theorem*}{Theorem}
\numberwithin{equation}{section}
\newtheorem{theorem}{Theorem}[section]
\newtheorem{proposition}[theorem]{Proposition}
\newtheorem{lemma}[theorem]{Lemma}
\newtheorem{corollary}[theorem]{Corollary}
\newtheorem{definition}  [theorem] {Definition}
\begin{document}

\title[A Daleskii-Krein theorem for Hermitian...]{A Daleskii-Krein theorem for Hermitian matrix-functions based on vector-fields}
\author{Marcus Carlsson}
\address{Centre for Mathematical Sciences, Lund University\\Box 118, SE-22100, Lund,  Sweden\\}
\email{marcus.carlsson@math.lu.se}

\begin{abstract}
We consider ``spectral'' matrix-functions for Hermitian matrices, where the novelty is that the function applied to the spectrum is allowed to be a vector-field rather than a scalar function (a.k.a isotropic matrix functions). We prove first order approximation formulas, generalizing the classical Daleskii-Krein theorem, as well as Lipschitz estimates.
\end{abstract}
\maketitle

\noindent \textbf{Keywords.} Functional calculus, spectral matrix-functions, Fr\'{e}chet differentiability, Lipschitz continuity.
\newline
\noindent \textbf{AMS subject classification.} 15A15, 15A16, 15A60, 15B57.

\section{Introduction}

``Isotropic'' matrix functions arise naturally in mechanics of elastic materials, (see e.g.~\cite{silhavy2013mechanics}), where \textit{isotropic} refers to the fact that such a material behaves ``the same in all directions''. If $\H_d$ denotes the Hilbert spaces of all Hermitian $d\times d$ matrices endowed with the Frobenius norm, then $\mathcal{F}:\H_d\rightarrow\H_d$ is isotropic if \begin{equation}\label{tf}U^*\mathcal F(A)U=\mathcal F(U^*AU)\end{equation} for all unitary matrices $U$. Let $A=U_A \Lambda_\alpha U_A^{*}$ be the spectral decomposition of the matrix $A$, where $\alpha$ denotes the (non-increasingly ordered) eigenvalues of $A$ and $\Lambda_\alpha$ is the corresponding diagonal matrix. Given an isotropic matrix function $\mathcal F$, it is not hard to see that  \begin{equation}\label{funcvector}\mathcal F(A)=U_A\Lambda_{F(\alpha)}U_A^{*}:=\mathcal{L}_F(A)\end{equation} for some vector field $F$ on $\R^d$ satisfying certain restrictions. More precisely if we set $F(\alpha)=\mathcal F(\Lambda_{\alpha})$ then \eqref{funcvector} holds, and one easily sees that $F$ needs to be \textit{block-constant}, i.e.~such that $F_m(x)=F_n(x)$ whenever $x_m=x_n$. In this note we take the (block-constant) function $F$ as given and define $\mathcal{F}(A)=\mathcal{L}_F(A)$ via \eqref{funcvector}. We will for the remainder use the latter notation since it makes the link to the underlying vector field $F$ more explicit. Note that for this to be well defined it suffices to assume that the domain of $F$ is $\R^d_{\geq}$, where  $\R^d_{\geq}\subset\R^d$ denotes the set of non-increasing $d$-tuples. 

These matrix functions have  recently also started to be of interest in matrix optimization theory \cite{andersson-etal-ol-2017,grussler2018low,larsson-olsson-ijcv-2016,mousavi2018unified}. In fact, their study in the scalar case has been of interest to the optimization community for quite some time (see e.g.~\cite{lewis1996derivatives}), although the terminology has been rather different and it seems the two fields were disconnected up until recently, see \cite{mousavi2018unified} which also contains a long list of applications in various other areas.

The first order perturbation of this matrix function is completely understood by the so called Daleskii-Krein theorem, which shows that $\mathcal{L}_F$ is Fr\'{e}chet differentiable and gives a concrete formula for its Fr\'{e}chet derivative $\mathcal{L}_F'$, so that \begin{equation}\label{pert}\mathcal{L}_F(A+E)=\mathcal{L}_F(A)+\mathcal{L}_F'(E)+o(\|E\|).\end{equation}
(Since all norms on finite dimensional spaces are equivalent, it is not important to specify which norm we use, but often we work with the Frobenius-norm $\|E\|_2$ for simplicity. The expression $\|E\|$ with no subindex will be reserved for the operator norm.) The formula \eqref{pert} was first shown by J.~L.~Daleckii and M.~G.~Krein in the scalar case in \cite{daletskii1965integration}, and the isotropic extension was first shown by J.~Sylvester \cite{sylvester1985differentiability}. We here provide a seemingly new proof based on complex analytic tools. Based on this we then move on to establish Lipschitz estimates. More precisely we show that \begin{equation}\label{liplok}\|\mathcal{L}_F(A)-\mathcal{L}_F(B)\|_2\leq \|F\|_{Lip}\|A-B\|_2,\end{equation} which is the best estimate one could hope for (in terms of the Frobenius norm), shown in Section \ref{lipshitz}. This result seems to be new, a proof with a suboptimal constant appears in \cite{mohebi2007analysis}.

Interesting examples of vector-fields $F$ arise as proximal operators in low-rank approximation theory \cite{andersson-etal-ol-2017,grussler2018low,larsson-olsson-ijcv-2016}. It is our hope that the theory provided here can be used for faster evaluation of such operators in iterative algorithms, but this has to be investigated elsewhere. In this case, the Lipschitz-estimate is however of lesser use, since proximal operators are known to be firmly-nonexpansive (see e.g.~Theorem 21.2 and Corollary 23.8 in \cite{bauschke2017convex}), and the estimate \eqref{liplok} usually only gives nonexpansiveness.

As a final remark, we note that there exists corresponding scalar matrix-functions based on singular values rather than eigenvalues. A Daleskii-Krein formula has recently been proven also for this case in \cite{noferini2017formula}, and the corresponding Lipschitz estimate was shown in \cite{andersson-etal-pams-2016}. Fast computation of this type of matrix functions is considered in \cite{arrigo2016computation}, and in a future publication, based on the present work, we will extend Noferini's formula to the vector-field setting.

\section{Perturbation theory for $\mathcal{L}_F$}\label{sec1}

We now analyze how perturbations affect the functional calculus $\mathcal{L}_F$, where $F$ is any given function on $\R_{\geq}^d$, i.e.~we are interested in $\mathcal{L}_F(A+E)$ for small $E$. More precisely we shall analyze the Fr\'{e}chet derivative of this map. We of course assume that $F$ is block constant at $\alpha$, but not necessarily in the whole of $\R^d_{\geq}$. For example, $F$ could be $F(x)=(1,0,\ldots,0)$, and in this case $\mathcal{L}_F(A)$ equals the orthogonal projection onto the subspace spanned by the first eigenvector, which is well defined as long as $\alpha_1$ has multiplicity 1. Matrices in $\H_d$ may of course have complex off-diagonal entries, but we will treat $\H_d$ as a real normed vector space, which is important for the definition of the Fr\'{e}chet derivative.
Note that \begin{equation}\label{tbeg}\mathcal{L}_F(A+E)-\mathcal{L}_F(A)=U_A\Big(\mathcal{L}_F(\Lambda_\alpha+U_A^*EU)-\mathcal{L}_F(\Lambda_\alpha)\Big)U_A^*,\end{equation} by which it follows that it suffices to compute the derivative in the case when $A$ is diagonal. The matrix $U_A^*EU_A$ will henceforth be denoted $\hat E$ (in analogy with \cite{carlsson2018perturbation1,carlsson2018perturbation2}).

\subsection{Point-symmetric functions and vector-fields}
We first introduce the function class for which $\mathcal{L}_F$ is Fr\'{e}chet differentiable. Given a vector $x\in\R^d$, we let $\per(x)$ be the set of permutation matrices $\Pi$ such that $\Pi x=x$. Any vector-field $F$ that is differentiable at a point $x$ gives rise to a (matrix) derivative $F'|_{x}$, and the property that \begin{equation}\label{pe}\Pi^*F'|_{x}\Pi=F'|_{x},\quad \Pi\in \per(x),\end{equation} will turn out to be crucial. To simplify verification of this fact, we introduce ``point-symmetric functions'', where the terminology is adopted from \cite{sendov2007higher}.

Since we are interested in vector-fields that act on eigenvalues, it is natural to restrict attention to vector-fields $F$ defined only on $\R^d_\geq$. However, such vector-fields are not differentiable on the boundary, i.e.~at $d$-tuples with components of higher multiplicity than one. We now describe an elegant extension process which naturally leads to extensions $F^{ext}$ that satisfy \eqref{pe}.

Let $\sort(x)$ be the set of permutation matrices $\Sigma$ such that $\Sigma x\in\R^d_{\geq}$. Given a fixed $\Sigma\in \sort(x)$, note that \begin{equation}\label{sort}\sort(x)=\Sigma \cdot \per (x).\end{equation}
If $F:\R^d_\geq\rightarrow\R^d$ is block-constant, we can therefore uniquely extend it to a function on $\R^d$ by setting \begin{equation}\label{ext}F^{ext}(x)=\Sigma^*F(\Sigma x).\end{equation} To see this, we use \eqref{sort} and note that $\Sigma^*$ is the inverse of $\Sigma$.\footnote{If a vector-field $F$ is block constant on some subset of $\R^d_\geq$, then we can clearly define $F^{ext}$ as above on a subset of $\R^d$. We omit the details of this rather trivial consideration.
} As a simple example, consider the vector-field $F:\R_{\geq}^2\rightarrow \R^2$ defined by $F(x_1,x_2)=(x_1-x_2,0)$. The the extension to $x_1<x_2$ via \eqref{ext} then becomes $F^{ext}(x_1,x_2)=(0,x_2-x_1)$.

Following \cite{sendov2007higher}, we denote by $T^{k,d}$ the set of $k-$tensors on $\R^d$. The set $T^{0,d}$ is defined as $\R$, $T^{1,d}$ is readily identified with $\R^d$ and $T^{2,d}$ with the set of $d\times d$-matrices over $\R$. A function $f:\R^d\rightarrow\R$ is thus identified with a $T^{0,d}-$valued map, a vector-field with a $T^{1,d}-$valued map and so on.

A $k$-tensor valued map $f:\R^d\rightarrow T^{k,d}$ is called point-symmetric if \begin{equation}\label{point} f(x)[h_1,\ldots,h_k]=f(\Pi x)[\Pi h_1,\ldots,\Pi h_k]\end{equation}
for all permutation-matrices $\Pi$. Note in particular that a point symmetric map satisfies $f(x)[h_1,\ldots,h_k]=f(x)[\Pi h_1,\ldots,\Pi h_k]$ whenever $\Pi\in \per(x)$.
This means that if $k=0$, point symmetric functions coincides with ``symmetric functions'' as defined e.g.~in \cite{lewis1996derivatives}, and it is easily seen that a block-constant vector-field $F:\R_\geq^d\rightarrow\R^d$ gives rise to a point symmetric extension $F^{ext}$.

A key feature of point symmetric maps is that the property is invariant under differentiation. More precisely, a Fr\'{e}chet derivative of a $T^{k,d}$-valued map naturally identifies with a new $T^{k+1,d}$-valued map, and if the former is point-symmetric then so is the latter. This is easy to show, we refer to \cite{sendov2007higher} for the details. In particular, if we identify a tensor $\mathcal{T}\in T^{2,d}$ with an $d\times d$-matrix ${M}$ as usual (i.e.~by the formula $\mathcal{T}[h_1,h_2]=h_2^tM h_1$) then $\mathcal{T}(\Pi h_1,\Pi h_2)$ translates to the matrix $\Pi^* M\Pi$. Summing up, we have proved the following:

\begin{proposition}\label{late}
If a vector-field $F$ is block-constant at $x$ then $F^{ext}$ is point-symmetric there. If $F^{ext}$ is also $C^1$ at $x$, then $(F^{ext})'|_x$ satisfies \eqref{pe}.
\end{proposition}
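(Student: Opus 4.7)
The plan is to prove the two assertions in sequence, using only the defining formula \eqref{ext} and the coset relation \eqref{sort}; the differentiability claim will then be obtained by transporting the point-symmetry identity through the Fr\'{e}chet limit.

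For the first claim, I would begin by observing that $\sort(\Pi x)=\sort(x)\cdot \Pi^{*}$ for every permutation matrix $\Pi$, since $\Sigma'\Pi x\in\R^d_\geq$ is equivalent to $\Sigma'\Pi\in\sort(x)$. Fixing any $\Sigma\in\sort(x)$, one thus has $\Sigma\Pi^{*}\in\sort(\Pi x)$ together with $(\Sigma\Pi^{*})(\Pi x)=\Sigma x$. Applying \eqref{ext} at $\Pi x$ gives
\[
F^{ext}(\Pi x)=(\Sigma\Pi^{*})^{*}F\bigl((\Sigma\Pi^{*})(\Pi x)\bigr)=\Pi\Sigma^{*}F(\Sigma x)=\Pi F^{ext}(x),
\]
so $F^{ext}(x)=\Pi^{*}F^{ext}(\Pi x)$. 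Pairing with an arbitrary $h$ and using $\langle \Pi^{*}v,h\rangle=\langle v,\Pi h\rangle$, this is exactly the point-symmetry identity \eqref{point} for $k=1$. The only role of block-constancy is to make $F^{ext}(x)$ independent of the choice of $\Sigma$, which was already handled in the discussion after \eqref{ext}.

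For the second claim, note that the hypothesis ``$F^{ext}$ is $C^1$ at $x$'' tacitly presupposes that $F^{ext}$ is defined and differentiable on some neighborhood $U$ of $x$; by the footnote to \eqref{ext} this requires $F$ to be block-constant on the sorted counterpart of $U$. The first part therefore applies pointwise on $U$, yielding $F^{ext}(y)=\Pi^{*}F^{ext}(\Pi y)$ for all $y\in U$ and all permutations $\Pi$. Specializing to $\Pi\in\per(x)$ and substituting $y=x+h$ with $\|h\|$ small enough that $x+\Pi h\in U$ as well (legitimate since $\|\Pi h\|=\|h\|$), and using $\Pi x=x$, one obtains $F^{ext}(x+h)=\Pi^{*}F^{ext}(x+\Pi h)$. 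Subtracting $F^{ext}(x)=\Pi^{*}F^{ext}(x)$ and applying the Fr\'{e}chet expansion on both sides gives
\[
(F^{ext})'\bigl|_{x}h + o(\|h\|) = \Pi^{*}(F^{ext})'\bigl|_{x}\Pi h + o(\|h\|).
\]
Letting $h\to 0$ along any fixed direction yields $(F^{ext})'|_{x}=\Pi^{*}(F^{ext})'|_{x}\Pi$, which is \eqref{pe}.

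The only mild bookkeeping issue is this implicit neighborhood-definedness of $F^{ext}$ in part (ii); once noted, the remainder is formal manipulation with permutation matrices and the definition of the Fr\'{e}chet derivative, and no appeal to the general machinery of \cite{sendov2007higher} is needed for this low-order case.
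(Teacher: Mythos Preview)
Your proposal is correct and follows essentially the same route as the paper: the paper's argument is given in the paragraphs immediately preceding the proposition, where point-symmetry of $F^{ext}$ is asserted from the definition and its preservation under differentiation is stated (with a reference to \cite{sendov2007higher}) before identifying the resulting $2$-tensor condition with \eqref{pe}. You have simply written out both steps explicitly---the coset computation for point-symmetry and the Fr\'{e}chet-limit argument for the derivative---rather than deferring the second to the cited reference.
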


Note that if $F$ is only defined on $\R^d_{\geq}$ then it is impossible to compute all partial derivatives at any point $x$ with components of multiplicity greater than one, hence it is crucial to use $F^{ext}$ and not simply $F$ in the last statement. Since \eqref{pe} is of key importance, let us illustrate what it entails in practice and introduce some notation. Given any $\alpha\in \R^d$ we let $\tilde{\alpha}_1,\ldots,\tilde\alpha_k$ be a (non-increasing) enumeration of the distinct values of $\alpha$. Given $1\leq\tilde m\leq k$ we introduce $S_{\tilde m}=\{m:\alpha_m=\tilde\alpha_{\tilde m}\}$, i.e.~ $S_{\tilde m}$ is the ``block'' corresponding to the value $\tilde{\alpha}_{\tilde{m}}$ in $\alpha$. Clearly $F$ is block-constant at $\alpha$ if and only if there are numbers $s_1,\ldots,s_k$ such that $F_{m}(\alpha)=s_{\tilde m}$ for all $m\in S_{\tilde{m}}$ and all $1\leq\tilde m\leq k$.
Let $\bar{1}$ denote the matrix containing only ones, where the context determines its size. We now illustrate \eqref{pe} in a similar way.

\begin{proposition}\label{lok}
The identity \eqref{pe} holds at $x=\alpha$ if and only if for each pair $1\leq \tilde m,\tilde n\leq k$, there are numbers $r_{\tilde{m}}$ and $t_{\tilde m,\tilde n}$ such that the matrix $F'|_{\alpha}$ equals \begin{equation}\label{pe2}r_{\tilde m}I+t_{\tilde m,\tilde n}\bar{1}\end{equation} in the subblock with indices in $S_{\tilde m}\times S_{\tilde n}$.
\end{proposition}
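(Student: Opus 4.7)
The plan is to unpack the invariance \eqref{pe} block by block and reduce the problem to two classical representation-theoretic observations about the permutation action of the symmetric group. First I would observe that $\per(\alpha)$ consists exactly of the permutations that fix each block $S_{\tilde m}$ setwise; equivalently, every $\Pi \in \per(\alpha)$ is block-diagonal with an arbitrary permutation $\Pi_{\tilde m}$ of $S_{\tilde m}$ in each diagonal block. Partitioning $M := F'|_{\alpha}$ accordingly into subblocks $M_{\tilde m,\tilde n}$ indexed by $S_{\tilde m}\times S_{\tilde n}$, the identity $\Pi^* M \Pi = M$ decouples as
\[
\Pi_{\tilde m}^{*} M_{\tilde m,\tilde n} \Pi_{\tilde n} = M_{\tilde m,\tilde n},
\]
required to hold for all permutations $\Pi_{\tilde m}$ of $S_{\tilde m}$ and $\Pi_{\tilde n}$ of $S_{\tilde n}$, independently.

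Next I would dispatch the two cases. For the off-diagonal case $\tilde m \neq \tilde n$, taking $\Pi_{\tilde n}$ the identity and $\Pi_{\tilde m}$ an arbitrary transposition shows any two rows of $M_{\tilde m,\tilde n}$ coincide, and dually any two columns coincide; hence every entry of the block is a single constant, i.e.~$M_{\tilde m,\tilde n} = t_{\tilde m,\tilde n}\bar{1}$. For the diagonal case $\tilde m = \tilde n$, specialising to $\Pi_{\tilde m} = \Pi_{\tilde n} = \Pi$ gives the standard commutant condition $\Pi^* M_{\tilde m,\tilde m}\Pi = M_{\tilde m,\tilde m}$ for every permutation on $S_{\tilde m}$. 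A transposition $(i,j)$ simultaneously swaps rows $i,j$ and columns $i,j$, which forces all diagonal entries of the block equal to a common value $a_{\tilde m}$ and all strictly off-diagonal entries equal to a common value $b_{\tilde m}$; then $M_{\tilde m,\tilde m} = (a_{\tilde m}-b_{\tilde m})I + b_{\tilde m}\bar{1} = r_{\tilde m} I + t_{\tilde m,\tilde m}\bar{1}$.

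The converse direction is a direct verification: matrices of the stated block form are preserved by $M \mapsto \Pi^* M \Pi$ for every $\Pi \in \per(\alpha)$, since $\bar{1}$ on any index rectangle is invariant under independent row/column permutations, and $I$ on a square block commutes with any permutation.

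No step here is genuinely difficult; the content is just the standard description of the commutant of the permutation representation of $S_n$ on $\R^n$, applied separately on each pair of blocks $(S_{\tilde m},S_{\tilde n})$. The only point worth flagging is the notational convention that in an off-diagonal subblock ($\tilde m \neq \tilde n$) the rectangular ``identity'' $I$ is not literally defined, so the summand $r_{\tilde m} I$ in \eqref{pe2} is to be read as absent there; this lets the diagonal and off-diagonal structure be packaged uniformly under a single compact formula.
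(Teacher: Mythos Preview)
Your proof is correct and follows essentially the same approach as the paper: you both reduce the global invariance to the blockwise condition $\Pi_{\tilde m}^{*}M_{\tilde m,\tilde n}\Pi_{\tilde n}=M_{\tilde m,\tilde n}$ and then analyze the diagonal and off-diagonal blocks separately via the permutation action on entries. Your write-up is in fact a bit more explicit than the paper's (you spell out the converse direction and the convention that the $r_{\tilde m}I$ summand is vacuous on rectangular off-diagonal blocks), but the underlying argument is the same.
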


Note that $t_{\tilde m,\tilde n}$ can be different from $t_{\tilde n,\tilde m}$ and that the diagonal term is only relevant when $\tilde m=\tilde n$. Moreover, if $|S_{\tilde m}|=1$ then $t_{\tilde m,\tilde m}$ is not well defined, so we set it to 0 by default.

\begin{proof}
Let $\Pi\in \per(\alpha)$ be arbitrary and, given any $1\leq\tilde m\leq k$, let $\Pi_{\tilde m}$ be the submatrix that permutes the elements in $S_{\tilde m}$. If $F'|_{\alpha,(\tilde m,\tilde n)}$ denotes the submatrix of $F'|_\alpha$ with indices in $S_{\tilde m}\times S_{\tilde n}$, then \eqref{pe} is equivalent to showing that \begin{equation}\label{pe1}\Pi^*_{\tilde m}F'|_{\alpha,(\tilde m,\tilde n)}\Pi_{\tilde n}=F'|_{\alpha,(\tilde m,\tilde n)}.\end{equation}
If $\tilde m\neq \tilde n$ then the left hand side can move any component of $F'|_{\alpha,(\tilde m,\tilde n)}$ to any given new position. Hence \eqref{pe1} is fulfilled if and only if $F'|_{\alpha,(\tilde m,\tilde n)}$ is constant. If $\tilde m\neq \tilde n$ then we have the same permutation matrix to the left and right in \eqref{pe1}, which means that the left hand side can move any diagonal component to a new \textit{diagonal} position, whereas the surrounding components can be moved arbitrarily outside of the diagonal. It follows that \eqref{pe1} is satisfied if and only if \eqref{pe2} holds.
\end{proof}

To sum up this section, we have shown that a vector-field $F$ that is initially only defined on $\R^d_{\geq}$ can be extended to $\R^d$ in a natural way such that the extension is point symmetric and the key equation \eqref{pe} holds whenever the extended vector-field $F^{ext}$ is $C^1$ there. Obviously, if $F$ is already defined on $\R^d$ and is point symmetric, we can do without the extension $F^{ext}$. Also, it is possible to do these definitions also locally, but for the sake of keeping notation simple we have omitted this.
\begin{definition}\label{point sym}
A vector field $F:\R^d_\geq\rightarrow\R^d$ is called $C^1$ point-symmetric at $\alpha\in\R^d_\geq$ if $F^{ext}$ is point-symmetric and $C^1$ at $\alpha$. If it is $C^1$ point-symmetric at all points in $\R^d_\geq$, we simply call it $C^1$ point-symmetric.
\end{definition}

The results of this paper pertain to matrix-functions that are $C^1$ point-symmetric, and therefore it is relevant to ask whether this class is larger than those already considered by Lewis and Sendov \cite{lewis2001twice}, i.e.~those arising as $F=\nabla f$ for some symmetric scalar function $f$. The answer is yes, for the latter class equals the set of \textit{conservative} vector-fields, i.e.~vector-fields $F$ such that $\partial_mF_n=\partial_nF_m$, and it is easy to see that the class of $C^1$ point-symmetric vector-fields is much larger. To see a simple example of this, consider the vector-field $F:\R_{\geq}^2\rightarrow \R^2$ defined by $F(x_1,x_2)=((x_1-x_2)^2,0)$. The the extension to $x_1<x_2$ via \eqref{ext} then becomes $F^{ext}(x_1,x_2)=(0,(x_1-x_2)^2)$ which clearly is $C^1$ also at the boundary $x_1=x_2$.


\subsection{The partial derivatives of $\mathcal{L}_F$}

We can now start to compute Gateux derivatives of $\mathcal{L}_F$ at $\Lambda_\alpha$ for certain directions $E$,  i.e.~the derivative of $\R\ni h\mapsto \mathcal{L}_F(\Lambda_\alpha+hE)$ at $h=0$, which we denote by $\frac{\partial\mathcal{L}_F}{\partial E}(\Lambda_\alpha)$. Let $E_{m,n}$ be the matrix which is 1 at indices $(m,n)$ and has zeroes elsewhere, and let $({e}_m)_{m=1}^d$ denote the canonical basis in $\C^d$. We will also use the notation $\diag(\alpha)$ for $\Lambda_\alpha$.

\begin{lemma}\label{l11}
If $F$ is a $C^1$ point-symmetric vector field at $\alpha$, then $$\frac{\partial\mathcal{L}_F}{\partial E_{m,m}}(\Lambda_\alpha)=\diag((F^{ext})'|_{\alpha}{e}_m).$$
\end{lemma}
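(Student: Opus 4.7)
The proof should be essentially immediate once we recognize a key simplification: for the diagonal perturbation $E_{m,m}$, the matrix $\Lambda_\alpha + h E_{m,m}$ remains diagonal for every $h \in \R$, so no complicated spectral decomposition is needed. The only subtlety is that the entries $\alpha + h e_m$ may no longer be sorted in non-increasing order, which is precisely the situation the extension $F^{ext}$ was designed to handle.

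The plan is as follows. First I would unpack what $\mathcal{L}_F$ does to an arbitrary diagonal matrix $\diag(x)$ for $x \in \R^d$ (not necessarily in $\R^d_\geq$). Given $\Sigma \in \sort(x)$, the identity $\Sigma \diag(x) \Sigma^* = \Lambda_{\Sigma x}$ provides a spectral decomposition of $\diag(x)$ with eigenvector matrix $U = \Sigma^*$, so by definition
\begin{equation*}
\mathcal{L}_F(\diag(x)) = \Sigma^* \Lambda_{F(\Sigma x)} \Sigma = \diag(\Sigma^* F(\Sigma x)) = \diag(F^{ext}(x)),
\end{equation*}
where the last equality is precisely the defining formula \eqref{ext} for the extension.

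Applying this with $x = \alpha + h e_m$ gives
\begin{equation*}
\mathcal{L}_F(\Lambda_\alpha + h E_{m,m}) = \diag\bigl(F^{ext}(\alpha + h e_m)\bigr)
\end{equation*}
for all $h \in \R$. Since $F^{ext}$ is assumed to be $C^1$ at $\alpha$, the right-hand side is differentiable in $h$ at $0$, and the derivative commutes with the (linear) $\diag$ operation, yielding
\begin{equation*}
\frac{\partial \mathcal{L}_F}{\partial E_{m,m}}(\Lambda_\alpha) = \diag\left(\frac{d}{dh}\bigg|_{h=0} F^{ext}(\alpha + h e_m)\right) = \diag\bigl((F^{ext})'|_\alpha e_m\bigr),
\end{equation*}
which is the desired formula.

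I do not expect any genuine obstacle here; the content of the lemma is really the observation that for diagonal perturbations, $\mathcal{L}_F$ reduces to componentwise evaluation of the extended vector-field $F^{ext}$. The role of the hypothesis that $F$ is $C^1$ point-symmetric at $\alpha$ is merely to ensure that the $h$-derivative of $F^{ext}(\alpha + h e_m)$ exists at $h=0$, so that the Gateaux derivative is well-defined; the point-symmetry itself plays no explicit role in this particular calculation (it becomes crucial only in later lemmas treating off-diagonal directions $E_{m,n}$ with $m \neq n$).
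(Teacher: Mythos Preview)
Your argument is correct, and it is a bit more direct than the paper's own proof. The paper does not first isolate the identity $\mathcal{L}_F(\diag(x))=\diag(F^{ext}(x))$; instead it fixes a permutation $\Pi\in\per(\alpha)$ so that $\alpha+h\Pi e$ remains non-increasing for $h>0$, computes the one-sided limit $h\to 0^+$ to obtain $\diag\big(\Pi^*(F^{ext})'|_\alpha\,\Pi e\big)$, and then invokes Proposition~\ref{late} (equation~\eqref{pe}) to show this expression is independent of $\Pi$, thereby upgrading to the two-sided Gateaux derivative. Your route sidesteps the one-sided issue entirely: once $\mathcal{L}_F\circ\diag=\diag\circ F^{ext}$ is recognized, the $C^1$ hypothesis on $F^{ext}$ gives the two-sided derivative immediately, and the permutation invariance is absorbed into the very definition of $F^{ext}$. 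What the paper's more explicit computation buys is that it makes visible exactly where \eqref{pe} enters, which feeds directly into the remark following the lemma that \eqref{pe} is in fact \emph{necessary} for the Gateaux derivative to be linear (and hence for Fr\'{e}chet differentiability); in your formulation that necessity is less apparent.
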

\begin{proof}
Let $e\in \R^d$ be given and let $\Pi\in \per(\alpha)$ be such that $\alpha+h \Pi e$ is non-increasing,  for all small enough $h>0$ (we assume for the remainder that $h$ is arbitrary but positive). Note that this means that $\Pi$ only permutes within each subblock $S_m$, in which it orders the components of $h$ non-increasingly. If $E=\diag(e)$ then the spectral decomposition   of $\Lambda_\alpha+hE$ equals $\Pi^*\diag(\alpha+h \Pi e)\Pi$, so \begin{equation*}\frac{1}{h}\Big(\mathcal{L}_F(\Lambda_\alpha+hE)-\mathcal{L}_F(\Lambda_\alpha)\Big)=\frac{1}{h}\Big(\Pi^*\diag(F(\alpha+h \Pi e)-F(\alpha))\Pi\Big).\end{equation*} Upon taking a limit as $h\rightarrow 0^+$ we obtain $\Pi^*\diag((F^{ext})'|_{\alpha} \Pi e)\Pi=\diag(\Pi^{*} (F^{ext})'|_{\alpha}\Pi e)$. By Proposition \ref{late} this expression is independent of $\Pi$, and hence we can drop the assumption $h>0$. Taking the limit as $h\rightarrow 0$ we conclude that \begin{equation*}\lim_{h\rightarrow 0}\frac{1}{h}\Big(\mathcal{L}_F(\Lambda_\alpha+hE)-\mathcal{L}_F(\Lambda_\alpha)\Big)= (F^{ext})'|_{\alpha} e.\end{equation*}
 In particular, if $E=E_{m,m}$ we get the statement in the lemma.
\end{proof}

\textbf{Remark:} In order for $\mathcal{L}_F$ to be Fr\'{e}chet differentiable this expression $\diag(\Pi^{*} (F^{ext})'|_{\alpha}\Pi e)$ for the one-sided Gateaux derivative needs to be linear in $E$, which happens if and only if \eqref{pe} holds for all permutations $\Pi\in Per(\alpha)$. This condition is thus necessary for Frechet differentiability.


We now consider the off-diagonal components. We will consider $E_{m,n}+E_{n,m}$ and $iE_{m,n}-iE_{n,m}$, which combined with $E_{m,m}$ provides a natural basis for $\H_d$ as a real vector space. A perturbation in the direction of $E_{m,n}$ changes the eigenvalues $\alpha_m,\alpha_n$ in a nontrivial way (see e.g.~\cite{carlsson2018perturbation1}), but we basically consider a $2\times 2$ matrix problem since the others are unaffected by the perturbation, and hence the upcoming proof is basic. Given a $C^1$ point-symmetric vector-field $F$ we first introduce the notation \begin{equation}\label{Flambda}[F,\alpha]({m, n})=
\left\{\begin{array}{cc}
  \frac{F_m(\alpha)-F_n(\alpha)}{\alpha_m-\alpha_n} & \alpha_m\neq \alpha_n \\
  \partial_m F_m(\alpha)-\partial_m F_n(\alpha) & \alpha_m=\alpha_n,~m\neq n\\
  \partial_m F_m(\alpha) & m=n
\end{array}\right.
\end{equation} Note that there seems to be a lack of symmetry in $m$ and $n$ on the second line, but this is not so due to the assumption that $F$ is $C^1$ point-symmetric and Proposition \ref{lok}. In terms of the numbers introduced before Definition \ref{point sym} we have, for $m\in S_{\tilde m}$ and $n\in S_{\tilde n}$ that $[F,\alpha]({m,n})=
  \frac{s_{\tilde m}-s_{\tilde n}}{\tilde\alpha_{\tilde m}-\tilde\alpha_{\tilde n}}$ when $m$ and $n$ belong to different blocks, i.e.~when $\tilde m\neq \tilde n$, we have $[F,\alpha]({m,n})=r_{\tilde m}$ when $m\neq n$ belong to the same block, whereas
$[F,\alpha]({m,n})=  r_{\tilde m}+t_{\tilde m,\tilde m}$ on the diagonal $m=n$.

\begin{lemma}\label{l1}
Let $\tau\in\C$ be unimodular. If $F$ is $C^1$ point-symmetric at $\alpha$ and $m\neq n$, then $$\frac{\partial\mathcal{L}_F}{\partial (\tau E_{m,n}+\bar\tau E_{n,m})}(\Lambda_\alpha)=[F,\alpha]\circ (\tau E_{m,n}+\bar\tau E_{n,m}),$$
where $\circ$ denotes Hadamard multiplication of the matrices.
\end{lemma}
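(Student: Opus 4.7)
The plan is to reduce the computation to a $2\times 2$ eigenvalue problem, since the perturbation $\tau E_{m,n}+\bar\tau E_{n,m}$ only affects the $2\times 2$ principal submatrix of $\Lambda_\alpha+h(\tau E_{m,n}+\bar\tau E_{n,m})$ at indices $\{m,n\}$. Writing $a=\alpha_m$ and $b=\alpha_n$, I split into the non-degenerate case $a\neq b$ and the degenerate case $a=b$.

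When $a\neq b$, I diagonalize the $2\times 2$ block $\bigl(\begin{smallmatrix} a & h\tau \\ h\bar\tau & b\end{smallmatrix}\bigr)$ explicitly: its eigenvalues are $a+O(h^2)$ and $b+O(h^2)$, so the spectrum moves only at second order and the entire first-order effect comes from the rotation of the eigenvectors. A routine computation gives $U(h)=I+hV+O(h^2)$ with $V$ anti-Hermitian (using $|\tau|=1$), supported only at positions $(m,n),(n,m)$, with $V_{m,n}=-\tau/(a-b)$ and $V_{n,m}=1/((a-b)\tau)$. Since the sorted perturbed spectrum $\beta(h)$ differs from $\alpha$ only at order $h^2$, we have $F(\beta(h))=F(\alpha)+O(h^2)$, and expanding $\mathcal{L}_F\bigl(\Lambda_\alpha+h(\tau E_{m,n}+\bar\tau E_{n,m})\bigr)=U(h)\Lambda_{F(\beta(h))}U(h)^{*}$ gives derivative $[V,\Lambda_{F(\alpha)}]$. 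Its $(m,n)$-entry $\tau(F_m(\alpha)-F_n(\alpha))/(\alpha_m-\alpha_n)$ reproduces the first line of \eqref{Flambda} Hadamard-multiplied against $\tau E_{m,n}+\bar\tau E_{n,m}$.

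When $a=b$, the $2\times 2$ block admits the $h$-independent eigenvectors $\tfrac{1}{\sqrt{2}}(e_m\pm\bar\tau e_n)$ with eigenvalues $a\pm h$. I embed these into a unitary $U$ whose remaining columns are $e_k$ for $k\neq m,n$, and invoke the point-symmetric extension $F^{ext}$ because the corresponding eigenvalue vector $\tilde\beta=\alpha+h(e_m-e_n)$ need not lie in $\R^d_\geq$ when $|S_{\tilde m}|>2$. Proposition~\ref{lok} then yields $(F^{ext})'|_\alpha(e_m-e_n)=r_{\tilde m}(e_m-e_n)$: inside $S_{\tilde m}$, the rank-one piece $t_{\tilde m,\tilde m}\bar{1}$ annihilates $(e_m-e_n)|_{S_{\tilde m}}$ because it has zero sum, and the off-block pieces $t_{\tilde l,\tilde m}\bar{1}$ do the same. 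Hence the derivative collapses to $r_{\tilde m}\,U(E_{m,m}-E_{n,n})U^{*}$, and a short matrix calculation with the explicit $U$ yields $U(E_{m,m}-E_{n,n})U^{*}=\tau E_{m,n}+\bar\tau E_{n,m}$; combined with $[F,\alpha](m,n)=r_{\tilde m}$ in this regime, this is the claimed identity.

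The main obstacle will be the degenerate subcase $a=b$ with $|S_{\tilde m}|>2$: the family of diagonalizing unitaries $U(h)$ has no continuous limit at $h=0$, and $\tilde\beta$ is not non-increasingly sorted. This is exactly where $F^{ext}$ and Proposition~\ref{lok} become indispensable --- they guarantee both that the final answer is insensitive to the eigenbasis chosen within the degenerate block, and that the first-order action of $F^{ext}$ on $e_m-e_n$ collapses neatly onto the single scalar $r_{\tilde m}$.
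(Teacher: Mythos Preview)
Your proof is correct and follows essentially the same approach as the paper: reduce to the $2\times2$ block, diagonalize it explicitly in the two cases $\alpha_m\neq\alpha_n$ and $\alpha_m=\alpha_n$, and invoke Proposition~\ref{lok} in the degenerate case. The only differences are cosmetic --- you keep the full $d$-dimensional frame (using $F^{ext}$ to handle sorting) rather than collapsing to a literal $2\times 2$ matrix as the paper does, and you package the non-degenerate derivative as the commutator $[V,\Lambda_{F(\alpha)}]$ instead of expanding the matrix product by hand.
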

\begin{proof}
We may assume that $m=1$, $n=2$ and that $\Lambda_\alpha$ is a $2\times 2$ matrix, as noted earlier. Then $\Lambda_\alpha+h{(\tau E_{1,2}+\bar\tau E_{2,1})}=\left(
                           \begin{array}{cc}
                             \alpha_1 & \tau h \\
                             \bar \tau h & \alpha_2 \\
                           \end{array}
                         \right)$. If $\alpha_1\neq \alpha_2$, some simple calculations give $$\left(
                           \begin{array}{cc}
                             \alpha_1 & \tau h \\
                             \bar \tau h & \alpha_2 \\
                           \end{array}
                         \right)=\left(\left(
                           \begin{array}{cc}
                             1 & -\frac{\tau h}{\alpha_1-\alpha_2} \\
                             \frac{\bar \tau h}{\alpha_1-\alpha_2} & 1 \\
                           \end{array}
                         \right)+O(h^2)
\right)\Big(\Lambda_\alpha+O(h^2)\Big)\left(\left(
                           \begin{array}{cc}
                             1 & \frac{\tau h}{\alpha_1-\alpha_2} \\
                             -\frac{\bar \tau h}{\alpha_1-\alpha_2} & 1 \\
                           \end{array}
                         \right)+O(h^2)
\right)$$ which yields that
\begin{align*}&\frac{1}{h}\left(\mathcal{L}_F(\left(
                           \begin{array}{cc}
                             \alpha_1 & \bar \tau h \\
                             \bar \tau h & \alpha_2 \\
                           \end{array}
                         \right))-\mathcal{L}_F(\left(
                           \begin{array}{cc}
                             \alpha_1 & 0 \\
                             0 & \alpha_2 \\
                           \end{array}
                         \right))\right)=\frac{1}{h}\Big(\\&\left(
                           \begin{array}{cc}
                             1 & -\frac{\tau h}{\alpha_1-\alpha_2} \\
                             \frac{\bar \tau h}{\alpha_1-\alpha_2} & 1 \\
                           \end{array}
                         \right)\left(
                           \begin{array}{cc}
                             F_1(\alpha) & 0 \\
                             0 & F_2(\alpha) \\
                           \end{array}
                         \right)\left(
                           \begin{array}{cc}
                             1 & \frac{\tau h}{\alpha_1-\alpha_2} \\
                             -\frac{\bar \tau h}{\alpha_1-\alpha_2} & 1 \\
                           \end{array}
                         \right)-\left(
                           \begin{array}{cc}
                             F_1(\alpha) & 0 \\
                             0 & F_2(\alpha) \\
                           \end{array}
                         \right)+O(h^2)\Big)=\\&
                         \left(\begin{array}{cc}
                             0 & \frac{F_1(\alpha)-F_2(\alpha)}{\alpha_1-\alpha_2}\tau  \\
                             \frac{F_1(\alpha)-F_2(\alpha)}{\alpha_1-\alpha_2}\bar \tau  & 0 \\
                           \end{array}
                         \right)+O(h)=\left(\begin{array}{cc}
                             0 & {}[F,\alpha]\tau  \\
                             {}[F,\alpha]\bar \tau  & 0 \\
                           \end{array}
                         \right)+O(h),
\end{align*}
as desired. If $\alpha_1=\alpha_2$ we instead get $$\left(
                           \begin{array}{cc}
                             \alpha_1 & \tau h \\
                             \bar \tau h & \alpha_1 \\
                           \end{array}
                         \right)=\frac{1}{2}\left(\left(
                           \begin{array}{cc}
                             1 &  \tau  \\
                             \bar\tau & -1 \\
                           \end{array}
                         \right)
\right)\left(
                           \begin{array}{cc}
                             \alpha_1+ h & 0 \\
                             0 & \alpha_1- h \\
                           \end{array}
                         \right)\left(
                           \begin{array}{cc}
                             1 & \tau  \\
                             \bar \tau & -1 \\
                           \end{array}
                         \right)
$$ (without ordo terms). Since $F$ is assumed to be $C^1$ point-symmetric at $\alpha$, Proposition \ref{lok} gives that there are values $s,r,t$ such that $$F_1(\alpha_1+h,\alpha_1-h)=s+\scal{(r+t,t),(h,-h)}+o(h)=s+rh+o(h),$$ and similarly $F_2(\alpha_1+h,\alpha_1-h)=s+\scal{(t,r+t),(h,-h)}+o(h)=s-rh+o(h)$. Thus
\begin{align*}&\mathcal{L}_F(\left(
                           \begin{array}{cc}
                             \alpha_1 & \tau h \\
                             \bar \tau h & \alpha_1 \\
                           \end{array}
                         \right))=\frac{1}{2}\left(
                           \begin{array}{cc}
                             1 & \tau \\
                             \bar \tau  & -1 \\
                           \end{array}
                         \right)\left(
                           \begin{array}{cc}
                             F_1(\alpha_1+h,\alpha_1-h) & 0 \\
                             0 & F_2(\alpha_1+h,\alpha_1-h) \\
                           \end{array}
                         \right)\left(
                           \begin{array}{cc}
                             1 & \tau  \\
                             \bar \tau  & -1 \\
                           \end{array}
                         \right)=\\&\frac{1}{2}
                         \left(\begin{array}{cc}
                             (s+rh)+(s-rh) & ((s+rh)+(s-rh)) \tau  \\
                             ((s+rh)-(s-rh))\bar \tau  & (s+rh)+(s-rh) \\
                           \end{array}
                         \right)+o(h)=                         \left(\begin{array}{cc}
                             s & rh \tau  \\
                             rh \bar \tau  & s \\
                           \end{array}
                         \right)+o(h).
\end{align*}
It follows that
\begin{align*}&\mathcal{L}_F(\left(
                           \begin{array}{cc}
                             \alpha_1 & \tau h \\
                             \bar \tau h & \alpha_1 \\
                           \end{array}
                         \right))-\mathcal{L}_F(\left(
                           \begin{array}{cc}
                             \alpha_1 & 0 \\
                             0 & \alpha_1 \\
                           \end{array}
                         \right))=
                         h\left(\begin{array}{cc}
                             0 & r\tau  \\
                             r \bar \tau  & 0 \\
                           \end{array}
                         \right)+o(h)
\end{align*}
from which the desired conclusion easily follows.

\end{proof}

It is interesting to note that when $\alpha_1=\alpha_2$ the eigenvectors, i.e. $(1, \bar \tau)$ and $(\tau, 1)$, are discontinuous as functions of the perturbation $\tau E_{m,n}+\bar \tau E_{n,m}$. There is a wealth of literature investigating how instable the eigenspaces are, given a certain separation of the distinct eigenvalues in $\Lambda_\alpha$, see e.g. Ch. VII of \cite{bhatia2013matrix} or \cite{davis1970rotation} or \cite{carlsson2018perturbation1}. With this in mind, it is a bit surprising that $\mathcal{L}_F$ (which inherently relies on the eigenvectors) is differentiable, as we shall show next.

\subsection{Fr\'{e}chet differentiability of $\mathcal{L}_F$}

We recall that a function $\mathcal{F}$ on $\H_d$ is Fr\'{e}chet differentiable at $A\in\H_d$ if there exists a real-linear operator $\mathcal{F}':\H_d\rightarrow\H_d$ such that $$\mathcal{F}(A+E)=\mathcal{F}(A)+\mathcal{F}'(E)+o(\|E\|).$$

\begin{lemma}\label{l111}
Let $A\in \H_d$ be given and let $F,G:\R^d\rightarrow \C^d$ be block-constant in a neighborhood of $\alpha$ such that $F(x)=G(x)+o(\|x-\alpha\|_2)$.
Then $\mathcal{L}_F$ is Fr\'{e}chet differentiable at $A$ if and only if $\mathcal{L}_G$ is, and the derivatives coincide.
\end{lemma}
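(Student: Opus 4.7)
The plan is to exploit the pointwise linearity of the assignment $F\mapsto\mathcal{L}_F$. For any $B$ in a neighborhood of $A$ with spectrum $\beta$, formula \eqref{funcvector} gives $\mathcal{L}_F(B)-\mathcal{L}_G(B)=U_B\Lambda_{F(\beta)-G(\beta)}U_B^{*}=\mathcal{L}_{F-G}(B)$, so setting $H:=F-G$ the problem reduces to showing that whenever a block-constant $H$ (in a neighborhood of $\alpha$) satisfies $H(x)=o(\|x-\alpha\|_2)$, the map $\mathcal{L}_H$ is Fr\'{e}chet differentiable at $A$ with derivative zero. Once this is done, the ``if and only if'' follows from the symmetry of the hypothesis in $F$ and $G$, and additivity forces the derivatives of $\mathcal{L}_F$ and $\mathcal{L}_G$ to coincide.

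Next, by the conjugation identity \eqref{tbeg} I may assume $A=\Lambda_\alpha$, so that $\mathcal{L}_H(A)=\Lambda_{H(\alpha)}=0$. For a perturbation $E\in\H_d$, let $\beta(E)\in\R^d_\geq$ denote the non-increasingly ordered eigenvalues of $\Lambda_\alpha+E$. The crucial (and classical) ingredient is the Hoffman--Wielandt inequality
\[
\|\beta(E)-\alpha\|_2\leq\|E\|_2,
\]
which in particular guarantees that $\beta(E)$ lies in the neighborhood where $H$ is block-constant once $\|E\|$ is sufficiently small, so that $\mathcal{L}_H(\Lambda_\alpha+E)$ is unambiguously defined. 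Since this matrix is unitarily similar to $\Lambda_{H(\beta(E))}$ and the Frobenius norm is unitarily invariant, one computes
\[
\|\mathcal{L}_H(\Lambda_\alpha+E)\|_2=\|H(\beta(E))\|_2=o(\|\beta(E)-\alpha\|_2)=o(\|E\|_2),
\]
which is precisely Fr\'{e}chet differentiability of $\mathcal{L}_H$ at $\Lambda_\alpha$ with vanishing derivative.

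The argument is essentially soft: the Hoffman--Wielandt estimate is the only genuine input, and it is a classical fact that I would cite rather than prove. I therefore do not anticipate any real obstacle; the lemma amounts to transporting the $o(\cdot)$ hypothesis on $H$ through the unitarily invariant Frobenius norm using the Lipschitz control of the sorted spectrum. The one point to double-check is that the neighborhood on which $F$ and $G$ (hence $H$) are block-constant is also a neighborhood of $\beta(E)$ for all small $E$, but this is immediate from the same perturbation inequality.
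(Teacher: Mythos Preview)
Your proposal is correct and follows essentially the same route as the paper: both arguments isolate the difference $\mathcal{L}_F-\mathcal{L}_G=\mathcal{L}_{F-G}$, observe that its Frobenius norm equals $\|H(\beta(E))\|_2$ by unitary invariance, and then invoke the Hoffman--Wielandt inequality to convert $o(\|\beta(E)-\alpha\|_2)$ into $o(\|E\|_2)$. The only cosmetic difference is that you name $H=F-G$ explicitly and phrase the conclusion as ``$\mathcal{L}_H$ has zero derivative'', whereas the paper leaves the difference inline; the substance is identical.
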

\begin{proof}
Let $\Lambda_\alpha+E=U\Lambda_\xi U^*$ be the spectral decomposition of $\Lambda_\alpha+E$. Since $\mathcal{L}_F(A)=\mathcal{L}_G(A)$ we get
\begin{align*}&\mathcal{L}_F(A+E)-\mathcal{L}_F(A)=U(\diag\big(F(\xi)-G(\xi)\big) )U^*+\mathcal{L}_G(A+E)-\mathcal{L}_G(A)=\\&O(F(\xi)-G(\xi))+\mathcal{L}_G(A+E)-\mathcal{L}_G(A)=\\&o(\|\xi-\alpha\|_2)+\mathcal{L}_G(A+E)-\mathcal{L}_G(A)=o(\|E\|_2)+\mathcal{L}_G(A+E)-\mathcal{L}_G(A)\end{align*}
where the last identity follows from the Hoffman-Wielandt inequality.
From this it is clear that Fr\'{e}chet differentiability of $G$ implies that of $F$ and vice versa.
\end{proof}

We remind the reader that $\tilde{\alpha}_1,\ldots,\tilde\alpha_k$ is a (non-increasing) enumeration of the distinct eigenvalues of $A,$ and that  $S_{\tilde m}=\{m:\alpha_m=\tilde\alpha_{\tilde m}\}$.
It is a basic result that $A$ can be written $$A=\sum_{{\tilde m}=1}^k\tilde\alpha_{{\tilde m}}P_{\tilde m}$$ where $P_{\tilde m}$ is the orthogonal projection onto the eigenspace corresponding to $\tilde\alpha_{\tilde m}$. This is also known as the spectral projection and is given by the formula \begin{equation}\label{fdx}P_{\tilde m}(A)=\int_{\Gamma_{\tilde m}}(zI-A)^{-1}\frac{dz}{2\pi i},\end{equation}
where $\Gamma_{\tilde m}$ is a suitably chosen circle around $\tilde\alpha_{\tilde m}$, which is shown in any book on functional analysis or operator theory.

\begin{proposition}\label{p1}
If $F$ is a $C^1$ point-symmetric vector-field in a neighborhood of $\alpha$, then $\mathcal{L}_F$ is Fr\'{e}chet differentiable.
\end{proposition}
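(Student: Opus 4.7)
The plan is to reduce to $A = \Lambda_\alpha$ via the similarity identity \eqref{tbeg}, then replace $F$ by its first-order Taylor polynomial and show that the simpler matrix-function obtained can be expressed entirely in terms of holomorphic spectral projections. Concretely, set $L = (F^{ext})'|_\alpha$ and let $G(x) = F(\alpha) + L(x - \alpha)$. The first task is to verify that $G$ is block-constant on $\R^d_{\geq}$ in a neighborhood of $\alpha$, so that Lemma \ref{l111} applies and reduces the question to Fr\'echet differentiability of $\mathcal{L}_G$. For $\xi \in \R^d_{\geq}$ sufficiently close to $\alpha$, continuity of the ordering forces $\xi_m = \xi_n$ to imply $\alpha_m = \alpha_n$; then the block-pattern $r_{\tilde m} I + t_{\tilde m,\tilde n}\bar{1}$ furnished by Proposition \ref{lok} guarantees $G_m(\xi) = G_n(\xi)$ after a brief calculation (the $t$-parts contribute identically to rows $m$ and $n$, while the $r$-part contributes $r_{\tilde m}(\xi_m - \xi_n) = 0$).

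The second task is to expand $\mathcal{L}_G(\Lambda_\alpha + E)$ into holomorphic pieces. Denoting by $\xi$ and $U$ the non-increasingly ordered eigenvalues and a diagonalizing unitary of $\Lambda_\alpha + E$, I split
\[ \mathcal{L}_G(\Lambda_\alpha + E) = U\Lambda_{F(\alpha)}U^* + U\Lambda_{L(\xi - \alpha)}U^*. \]
The first summand equals $\sum_{\tilde m} s_{\tilde m} P_{\tilde m}(\Lambda_\alpha + E)$. Plugging the block pattern of $L$ into the second summand yields, for $n \in S_{\tilde n}$,
\[ (L(\xi - \alpha))_n = \sum_{\tilde m} t_{\tilde n, \tilde m} \sum_{p \in S_{\tilde m}} (\xi_p - \alpha_p) + r_{\tilde n}(\xi_n - \alpha_n). \]
The first term is constant over $n \in S_{\tilde n}$ and depends only on $\sum_{p \in S_{\tilde m}} \xi_p = \mathsf{tr}\bigl(P_{\tilde m}(\Lambda_\alpha + E)(\Lambda_\alpha + E)\bigr)$; the second term, after being conjugated and summed over $n \in S_{\tilde n}$, collapses via commutativity of $P_{\tilde n}$ with $\Lambda_\alpha + E$ to $r_{\tilde n}\bigl(P_{\tilde n}(\Lambda_\alpha + E)(\Lambda_\alpha + E) - \tilde \alpha_{\tilde n} P_{\tilde n}(\Lambda_\alpha + E)\bigr)$.

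The outcome is an explicit representation of $\mathcal{L}_G(\Lambda_\alpha + E)$ built purely from $\Lambda_\alpha + E$ and the spectral projections $P_{\tilde m}(\Lambda_\alpha + E)$. Since the contour integral formula \eqref{fdx} makes each $P_{\tilde m}$ a holomorphic function of its matrix argument for sufficiently small $E$ (the contours $\Gamma_{\tilde m}$ can be held fixed around $\tilde \alpha_{\tilde m}$), the whole expression is real-analytic and hence Fr\'echet differentiable at $\Lambda_\alpha$. The main obstacle I anticipate is the block-constancy verification for $G$: it is short, but uses the full structural force of Proposition \ref{lok} and would fail for a generic linear approximant that does not respect the $r_{\tilde m} I + t_{\tilde m,\tilde n}\bar{1}$ pattern. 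Once $G$ is brought within the scope of Lemma \ref{l111}, the remainder of the argument is bookkeeping that leverages the known holomorphy of spectral projections.
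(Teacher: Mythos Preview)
Your proposal is correct and follows essentially the same route as the paper: reduce via Lemma~\ref{l111} to the affine approximant $G(x)=F(\alpha)+F'|_\alpha(x-\alpha)$, then use the block structure of $F'|_\alpha$ from Proposition~\ref{lok} to rewrite $\mathcal{L}_G(B)$ entirely in terms of holomorphic spectral data of $B$. The only cosmetic differences are that you express $\sum_{p\in S_{\tilde m}}\xi_p$ as $\mathrm{tr}\bigl(P_{\tilde m}(B)\,B\bigr)$ where the paper uses a contour integral of the logarithmic derivative of $\det(zI-B)$, and you make explicit the verification that $G$ is block-constant near $\alpha$ (needed for Lemma~\ref{l111}), which the paper leaves implicit.
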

\begin{proof}
By the lemma it suffices to prove the above statement for $$G(x)=F({\alpha})+F'|_{\alpha}(x-\alpha).$$ Let $B\in\H_d$ denote a matrix in the vicinity of $A$ and denote its eigenvalues by $\beta$. We then have
\begin{align*}&\mathcal{L}_G(B)=U_B(\diag(F(\alpha)) +\diag(F'|_{\alpha}(\beta-\alpha) )U_B^*=\\&
\sum_{{\tilde m}=1}^kU_B(\diag(F({\alpha})) +\diag(F'|_{\alpha}(\beta-\alpha) )U_B^*P_{\tilde m}(B)\end{align*} where diagonal values outside of the block $S_{\tilde m}$ are irrelevant for the ${\tilde m}$:th term. Therefore the computation can be continued as follows \begin{align*}
&\sum_{{\tilde m}=1}^kU_B\left(s_{\tilde m} I +r_{\tilde m}\diag(\beta-\alpha)+\Big(\sum_{{\tilde n}=1}^k t_{{\tilde m},{\tilde n}}\sum_{n\in S_{\tilde n}}(\beta_{n}-\tilde{\alpha}_{\tilde n})\Big)I\right)U_B^*P_{\tilde m}(B)=\\&
\sum_{{\tilde m}=1}^k r_{\tilde m} B P_{\tilde m}(B)+\sum_{{\tilde m}=1}^k\left(s_{\tilde m} -r_{\tilde m}\tilde\alpha_{\tilde m}+\Big(\sum_{{\tilde n}=1}^k t_{{\tilde m},{\tilde n}}\sum_{n\in S_{\tilde n}}(\beta_{n}-\tilde{\alpha}_{\tilde n})\Big)\right)P_{\tilde m}(B).
\end{align*}
Now note that we can write $B P_{\tilde m}(B)=\int_{\Gamma_{\tilde m}}\frac{z}{(zI-B)}\frac{dz}{2\pi i}$ and that $$\sum_{n\in S_{\tilde n}}(\beta_n-\tilde\alpha_{\tilde n})=\int_{\Gamma_{\tilde n}}\frac{(z-\tilde\alpha_{\tilde n})\frac{d}{dz}\det(zI-B)}{\det(zI-B)}\frac{dz}{2\pi i},$$ for $B$ sufficiently close to $A$. Summing up we have shown that\begin{equation}\label{hyf}
\begin{aligned}&\mathcal{L}_G(B)=\sum_{{\tilde m}=1}^k \int_{\Gamma_{\tilde m}}\frac{r_{\tilde m} z}{zI-B}\frac{dz}{2\pi i}+\\&\sum_{{\tilde m}=1}^k\left(s_{\tilde m} -r_{\tilde m}\tilde\alpha_{\tilde m}+\Big(\sum_{{\tilde n}=1}^k t_{{\tilde m},{\tilde n}}\int_{\Gamma_{\tilde n}}\frac{(z-\tilde\alpha_{\tilde n})\frac{d}{dz}\det(zI-B)}{\det(zI-B)}\frac{dz}{2\pi i}\Big)\right)\int_{\Gamma_{\tilde m}}(zI-B)^{-1}\frac{dz}{2\pi i}.
\end{aligned}\end{equation}
This calculation was made under the assumption that $B\in \H_d$ but the final expression is actually a valid expression for all matrices $B$, and as such it is holomorphic in each variable $B_{i,j}$ separately. If we identify the space of $d\times d$-matrices with $\C^{d^2}$, the function is then holomorphic and it is well known that this implies that the function is $C^\infty$, see e.g.~Theorem 2.2.1 and Corollary 2.2.2 in \cite{krantz2001function}. In particular it is differentiable, and hence the expression in \eqref{hyf}
is also Fr\'{e}chet differentiable on the space of $d\times d$-matrices. Since the restriction to the subspace $\H_d$ equals $\mathcal{L}_G(B)$ for $B$ in a neighborhood of $A$, it follows that $\mathcal{L}_G$ is Fr\'{e}chet differentiable at $A$.
\end{proof}

We now come to the first main theorem, a generalization of the so called Daleskii-Krein theorem \cite{bhatia2013matrix,daletskii1965integration,higham2008functions,roger1991topics} to the matrix-functions based on vector-fields introduced in this paper.  Given any matrix $M$ we use the notation $M^o=M-M\circ I$, i.e.~$M^o$ coincides with $M$ off the diagonal, whereas it is 0 on the diagonal.

\begin{theorem}\label{t1}
Let $A=U\Lambda_\alpha U^*$ be given and let $F$ be $C^1$ point-symmetric vector-field at $\alpha$. Given $E$ in $\H_d$, set $\hat E=U^*EU$ and let $\hat{e}$ be the vector with the diagonal components of $\hat E$.
Then $$\mathcal{L}_F'(E)=U\Big([F,\alpha]\circ \hat E+\diag_{(F'|_{\alpha})^o \hat{e}}\Big)U^*.$$
\end{theorem}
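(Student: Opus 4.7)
The plan is to reduce to the case $A=\Lambda_\alpha$ and then verify the formula by real-linearity on a basis of $\H_d$, using the Gateaux-derivative formulas established in Lemmas \ref{l11} and \ref{l1}. The availability of an actual Fr\'echet derivative (rather than merely Gateaux derivatives in specific directions) is crucial here and is supplied by Proposition \ref{p1}.

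First, I would invoke \eqref{tbeg} to rewrite
$$\mathcal{L}_F(A+E)-\mathcal{L}_F(A)=U\Bigl(\mathcal{L}_F(\Lambda_\alpha+\hat E)-\mathcal{L}_F(\Lambda_\alpha)\Bigr)U^*,$$
which shows that it suffices to compute $\mathcal{L}_F'|_{\Lambda_\alpha}(\hat E)$ and then conjugate by $U$. By Proposition \ref{p1}, this Fr\'echet derivative exists and is $\R$-linear on $\H_d$, so it is determined by its values on any real basis. I would take the standard basis $\{E_{m,m}\}_{m=1}^d\cup\{E_{m,n}+E_{n,m}\}_{m<n}\cup\{i(E_{m,n}-E_{n,m})\}_{m<n}$.

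Next I would apply the two lemmas. Lemma \ref{l11} gives $\mathcal{L}_F'|_{\Lambda_\alpha}(E_{m,m})=\diag((F^{ext})'|_{\alpha}e_m)$, and Lemma \ref{l1}, used with $\tau=1$ and $\tau=i$, handles the remaining basis elements. Decomposing an arbitrary $\hat E\in\H_d$ as
$$\hat E=\sum_m \hat E_{m,m}E_{m,m}+\sum_{m<n}\bigl(\hat E_{m,n}E_{m,n}+\overline{\hat E_{m,n}}E_{n,m}\bigr)$$
(with $\hat E_{m,n}=a_{m,n}+ib_{m,n}$ real-combining the two basis elements for each off-diagonal pair), real-linearity immediately gives
$$\mathcal{L}_F'|_{\Lambda_\alpha}(\hat E)=\diag\bigl(F'|_{\alpha}\hat e\bigr)+[F,\alpha]\circ \hat E^o.$$

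The final step is a bookkeeping check that this expression coincides with $[F,\alpha]\circ\hat E+\diag_{(F'|_{\alpha})^o\hat e}$. Off the diagonal the two sides agree trivially, since both reduce to $[F,\alpha]\circ\hat E^o$. On the diagonal, using $[F,\alpha](m,m)=\partial_m F_m(\alpha)$ from \eqref{Flambda}, one verifies
$$\partial_m F_m(\alpha)\,\hat E_{m,m}+\sum_{n\neq m}\partial_n F_m(\alpha)\,\hat E_{n,n}=\bigl(F'|_{\alpha}\hat e\bigr)_m,$$
which is just the splitting of $F'|_{\alpha}$ into its diagonal and off-diagonal parts applied to $\hat e$. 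The main obstacle is really only bookkeeping: no genuinely new analytic input is required beyond the preceding lemmas, but one must be careful that the passage from Gateaux derivatives along individual basis directions to a formula for an arbitrary $\hat E$ is legitimized by the Fr\'echet differentiability from Proposition \ref{p1}.
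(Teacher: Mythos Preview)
Your proposal is correct and follows essentially the same route as the paper: invoke Proposition \ref{p1} for Fr\'echet differentiability, compute on the real basis of $\H_d$ via Lemmas \ref{l11} and \ref{l1}, and then perform the diagonal/off-diagonal bookkeeping to rewrite $\diag(F'|_\alpha\hat e)+[F,\alpha]^o\circ\hat E$ as $[F,\alpha]\circ\hat E+\diag_{(F'|_\alpha)^o\hat e}$. Your write-up is in fact slightly more explicit than the paper's on both the reduction via \eqref{tbeg} and the final identity on the diagonal.
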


If $f$ is real valued and $F(x)=(f(x_1),\ldots,f(x_d))$, then $\mathcal{L}_F$ reduces to the traditional matrix functions (functional calculus) and the below theorem implies the Daleskii-Krein theorem, which we elaborate more on in Section \ref{tg}. If $F=\nabla f$ for some symmetric function $f:\R^d\rightarrow \R$, the theorem provides a formula for the second order term in a Taylor type expansion of $f(\eig(A+E))$, and the above theorem reduces to Theorem 3.2 of \cite{lewis2001twice}. The proof given here is more general and also shorter, it seems. Formulas for all possible orders was subsequently found in \cite{sendov2007higher}, but we will not pursue a similar quest here.

\begin{proof}
$\mathcal{L}_F$ is Fr\'{e}chet differentiable by the above proposition. Write $$\hat E=\sum_{m} a_m E_{m,m}+\sum_{n>m}b_{m,n}(E_{m,n}+E_{n,m})+\sum_{n>m}c_{m,n} i (E_{m,n}-E_{n,m})$$ and note that the above matrices provide an orthogonal basis for $\H_d$ as a real vector space. Using Lemma \ref{l11} for the first sum and \ref{l1} for the latter two (with $\tau$ equal to 1 and $i$ respectively), the formula $\mathcal{L}_F'(E)=U\Big([F,\alpha]^o\circ \hat E+\diag_{F'|_{\alpha} \hat{e}}\Big)U^*$ follows from basic multivariable calculus, and clearly $$[F,\alpha]^o\circ \hat E+\diag_{F'|_{\alpha} \hat{e}}=[F,\alpha]\circ \hat E+\diag_{(F'|_{\alpha})^o \hat{e}}.$$
\end{proof}

\subsection{The scalar case}\label{tg}

We now specialize to the scalar valued functional calculus, i.e.~when $F(x)=(f(x_1),\ldots, f(x_d))$ where $f:\R\rightarrow \R$ (or some subset including the actual spectrum). It is clear that any differentiable $f$ yields a $C^1$ point-symmetric $F$. Given $\alpha\in\R^d_{\geq}$, the matrix $[F,\alpha]$ will then be written $[f,\alpha]$ and simplifies to \begin{equation}\label{flambda}[f,\alpha]({i,j})=
\left\{\begin{array}{cc}
  \frac{f(\alpha_m)-f(\alpha_n)}{\alpha_m-\alpha_n} & \alpha_m\neq \alpha_n \\
  {f}'(\alpha_m) & \alpha_m=\alpha_n
\end{array}\right.
\end{equation} With this notation Theorem \ref{t1} implies the Daleskii-Krein theorem;
\begin{corollary}\label{c1}
Let $f$ be a $C^1$-function and $A=U_A\Lambda_\alpha U_A^*\in\H_d$. Given $E$ in $\H_d$ set $\hat E=U_A^*EU_A$. Then $$\mathcal{L}_f'(E)=U\Big([f,\lambda]\circ \hat E\Big)U^*.$$
\end{corollary}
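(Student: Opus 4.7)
The plan is to derive Corollary \ref{c1} as a direct specialization of Theorem \ref{t1}, by checking that the two additional ingredients appearing in the theorem's formula (the derivative matrix $F'|_\alpha$ and the bracket $[F,\alpha]$) reduce in the scalar case exactly to the data appearing in the Daleskii-Krein corollary.

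First I would verify that $F(x)=(f(x_1),\ldots,f(x_d))$ is $C^1$ point-symmetric on $\R^d$ whenever $f$ is $C^1$. Point-symmetry reduces to $\Pi F(x)=F(\Pi x)$ for every permutation $\Pi$, which follows immediately because $F_m(x)=f(x_m)$ depends only on the $m$-th coordinate. This also makes $F$ trivially block-constant, so no nontrivial extension is needed and the hypothesis of Theorem \ref{t1} is satisfied.

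The key computation is to identify $F'|_\alpha$. Since $F_m(x)=f(x_m)$, the partial derivatives are $\partial_n F_m(x)=f'(x_m)\delta_{mn}$, so $F'|_\alpha=\diag(f'(\alpha_1),\ldots,f'(\alpha_d))$ is already diagonal. Consequently $(F'|_\alpha)^o=0$, and the diagonal term $\diag_{(F'|_\alpha)^o \hat e}$ in Theorem \ref{t1} simply vanishes. Next I would check that $[F,\alpha]$ coincides with $[f,\alpha]$: on the off-diagonal with $\alpha_m\neq\alpha_n$ one reads off $\tfrac{f(\alpha_m)-f(\alpha_n)}{\alpha_m-\alpha_n}$ straight from \eqref{Flambda}; on the diagonal $m=n$ one gets $\partial_m F_m(\alpha)=f'(\alpha_m)$; and on the degenerate case $\alpha_m=\alpha_n$ with $m\neq n$, the formula $\partial_m F_m(\alpha)-\partial_m F_n(\alpha)$ evaluates to $f'(\alpha_m)-0=f'(\alpha_m)$, matching \eqref{flambda}.

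With these two observations, substituting into the formula of Theorem \ref{t1} yields
\[
\mathcal{L}_f'(E)=U\Big([f,\alpha]\circ\hat E+0\Big)U^*=U\big([f,\alpha]\circ\hat E\big)U^*,
\]
which is the statement of the corollary. The proof is essentially bookkeeping; the only point that needs a brief justification is the degenerate case $\alpha_m=\alpha_n$, where one has to remember that the second and third branches of \eqref{Flambda} collapse to the familiar $f'(\alpha_m)$ because in the scalar setup the off-diagonal partial derivatives $\partial_m F_n$ are identically zero.
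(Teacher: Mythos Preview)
Your proposal is correct and follows exactly the route the paper intends: the corollary is stated as an immediate consequence of Theorem \ref{t1}, and you have simply spelled out the verification that $(F'|_\alpha)^o=0$ and $[F,\alpha]=[f,\alpha]$ in the scalar case. The paper gives no further argument beyond remarking that any differentiable $f$ yields a $C^1$ point-symmetric $F$, so your write-up is a faithful (and more detailed) rendering of the same specialization.
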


If $A$ is invertible the above theorem can be used to approximate $\sqrt{A+E}$ for some small perturbation $E$, without having to do a spectral decomposition of $A+E$. It is interesting to note that $\sqrt{A+E}$ can be given a similar approximation even in the case when $A$ is singular, but this turns out to be a rather delicate issue, we refer to the adjacent paper \cite{carlsson2018perturbation2} for more on this particular case.

\section{Lipschitz continuity of $\mathcal{L}_F$}\label{lipshitz}

We can now prove that $\mathcal{L}_F$ is Lipschitz continuous (with respect to the Frobenius norm) with the same constant as $F$. We recall that for  $F:\R^d\rightarrow\R^d$ we
have that $$\|F\|_{Lip}=\sup_{x\neq y}\frac{\|F(x)-F(y)\|_2}{\|x-y\|_2},$$ which for $C^1$-functions equals $\sup_{x}\|F'|_{x}\|$ where the last norm refers to the operator norm
$$\|F'|_{x}\|=\sup_{y:~\|y\|=1}\frac{\|F'|_{x}y\|_2}{\|y\|_2}.$$
\begin{lemma}\label{rv}
Let $F$ be a $C^1$ point-symmetric vector-field at a particular point $\alpha$. Then $|[F,\alpha]({m,n})|\leq \|F\|_{Lip}$.
\end{lemma}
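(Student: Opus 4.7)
The plan is to go through the three branches of the definition \eqref{Flambda} of $[F,\alpha]({m,n})$ separately, reducing each to the pointwise operator-norm estimate $\|F'|_\alpha\|\le\|F\|_{Lip}$ recalled just before the lemma.

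The diagonal branch ($m=n$) is immediate: $[F,\alpha]({m,m})=\partial_m F_m(\alpha)$ is literally the $(m,m)$-entry of the Jacobian $F'|_\alpha$, and any entry of a matrix is bounded in absolute value by its operator norm.

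For the same-block branch ($\alpha_m=\alpha_n$, $m\neq n$), Proposition \ref{lok} and the subsequent discussion identify $[F,\alpha]({m,n})$ with the identity-coefficient $r_{\tilde m}$ of the $(S_{\tilde m},S_{\tilde m})$-subblock of $F'|_\alpha$. Using the block structure $r_{\tilde m}I+t_{\tilde m,\tilde m}\bar 1$ on the diagonal block and $t_{\tilde p,\tilde m}\bar 1$ on off-diagonal blocks, a short column-by-column computation shows that all the constant coefficients $t_{\tilde m,\tilde m}$ and $t_{\tilde p,\tilde m}$ cancel pairwise between the $m$-th and $n$-th columns of $F'|_\alpha$, leaving $F'|_\alpha(e_m-e_n)=r_{\tilde m}(e_m-e_n)$. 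So $e_m-e_n$ is an eigenvector of $F'|_\alpha$ with eigenvalue $r_{\tilde m}$, and the operator bound yields $|r_{\tilde m}|\le\|F\|_{Lip}$.

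The remaining branch $\alpha_m\neq\alpha_n$ is the only place where point-symmetry enters essentially. Let $\Pi$ be the transposition of $m$ and $n$; point-symmetry (applied via $F^{ext}$) gives $F(\Pi\alpha)=\Pi F(\alpha)$. Since $\Pi$ moves only the $m$-th and $n$-th coordinates of both $\alpha$ and $F(\alpha)$, the norms on both sides collapse in the same way,
\[ \|\alpha-\Pi\alpha\|_2=\sqrt 2\,|\alpha_m-\alpha_n|,\qquad \|F(\alpha)-F(\Pi\alpha)\|_2=\sqrt 2\,|F_m(\alpha)-F_n(\alpha)|, \]
and the Lipschitz inequality applied to this pair delivers $|[F,\alpha]({m,n})|\le\|F\|_{Lip}$. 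The main obstacle is a bookkeeping one here: $\Pi\alpha$ need not lie in $\R^d_{\geq}$, so one is implicitly using that $F^{ext}$ inherits the Lipschitz constant of $F$---a standard fact that follows from the sector decomposition $\R^d=\bigcup_\Pi \Pi\R^d_{\geq}$ together with the continuity of $F^{ext}$ across sector boundaries provided by block-constancy.
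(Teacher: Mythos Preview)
Your proof is correct. The diagonal and same-block branches match the paper's closely; your eigenvector observation $F'|_\alpha(e_m-e_n)=r_{\tilde m}(e_m-e_n)$ is in fact a slight sharpening of the paper's inner-product identity $\langle e_m-e_n,F'|_\alpha(e_m-e_n)\rangle=2r_{\tilde m}$, and both follow immediately from Proposition~\ref{lok}.

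The genuine divergence is in the different-block branch $\alpha_m\neq\alpha_n$. The paper never leaves $\R^d_\geq$: it first telescopes $s_{\tilde m}-s_{\tilde n}$ through \emph{adjacent} blocks via the triangle inequality, and then for adjacent indices $m,m+1$ compares $\alpha$ with the point $\gamma$ obtained by replacing $\alpha_m,\alpha_{m+1}$ by their average (so $\gamma\in\R^d_\geq$), using block-constancy to get $F_m(\gamma)=F_{m+1}(\gamma)$ and hence $F_m(\alpha)-F_{m+1}(\alpha)=\langle e_m-e_{m+1},F(\alpha)-F(\gamma)\rangle$. Your transposition argument is more direct---no telescoping, no adjacency reduction---but it pays for this by having to invoke the auxiliary fact $\|F^{ext}\|_{Lip}=\|F\|_{Lip}$, since $\Pi\alpha\notin\R^d_\geq$. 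That fact is true and your sector-decomposition sketch is the right proof (each sector $\Sigma^{-1}\R^d_\geq$ is closed and convex, $F^{ext}$ restricted there is an isometric conjugate of $F$, and any straight segment decomposes into finitely many sector-subsegments so the triangle inequality finishes it), but the paper's route sidesteps the need for this extension lemma entirely by staying inside the original domain of $F$.
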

\begin{proof}
The statement is obvious if $m=n$. Suppose that $m>n$ and that they belong to the same block. Then $[F,\alpha]({m,n})=\partial_m F_m-\partial_m F_n$, $\partial_m F_m=\partial_n F_n$ and $\partial_m F_n=\partial_n F_m$ (see Proposition \ref{lok}). The inequality then follows by noting that $$|2[F,\alpha]({m,n})|=|\scal{e_m-e_n,F'|_{\alpha}(e_m-e_n)}|\leq \|F'|_{\alpha}\|\|e_m-e_n\|^2=2\|F\|_{Lip}.$$
Finally, if $m\in S_{\tilde m}$ and $n\in S_{\tilde n}$ with ${\tilde m}< {\tilde n}$, then we have to prove that \begin{equation}\label{gt6}|{s_{\tilde m}-s_{\tilde n}}|\leq \|F\|_{Lip}(\tilde\alpha_{\tilde m}-\tilde\alpha_{\tilde n})\end{equation} If we show that this holds for any two adjacent numbers, we can write $s_{\tilde m}-s_{\tilde n}=\sum_{k={\tilde m}}^{{\tilde n}-1} s_{k}-s_{k+1}$ and obtain \eqref{gt6} by using the triangle inequality. We thus assume that ${\tilde n}={\tilde m}+1$. Since $F$ is block-constant it is no restriction to assume that $n=m+1$, and hence we have to show that $$|F_m(\alpha)-F_{m+1}(\alpha)|\leq \|F\|_{Lip}(\alpha_m-\alpha_{m+1}).$$
If we let $\gamma$ be obtained from $\alpha$ by replacing the values on positions $m$ and $m+1$ with $\frac{\alpha_m+\alpha_{m+1}}{2}$, we have $F_m(\gamma)=F_{m+1}(\gamma)$, again using that $F$ is block-constant. Thus $$F_m(\alpha)-F_{m+1}(\alpha)= (F_m(\alpha)-F_m(\gamma))+(F_{m+1}(\gamma)-F_{m+1}(\alpha))=\scal{e_m-e_{m+1},F(\alpha)-F(\gamma)}.$$ Since $\|\alpha-\gamma\|=\frac{\alpha_m-\alpha_{m+1}}{\sqrt{2}}$ and the modulus of the right hand side can be estimated by $\|e_m-e_{m+1}\|\|F\|_{Lip}\|\alpha-\gamma\|$, the desired inequality follows.
\end{proof}

\begin{lemma}\label{rvq}
Let $F$ be a continuous block-constant vector-field in $\R_\geq^d$ and define $G:\R^d\rightarrow\R^d$ as the convolution $G(x)=F^{ext}*\Psi(x)$, where $\Psi(y)=\psi(y_1)\psi(y_2)\ldots \psi(y_d)$ and $\psi:\R\rightarrow \R$ is any $C^\infty$-function with compact support. Then $G$ is block-constant and $C^\infty$.
\end{lemma}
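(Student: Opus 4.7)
The statement naturally splits into two independent claims: that $G$ is $C^\infty$, and that $G$ is block-constant. I would handle them in this order, since the first is essentially off-the-shelf while the second is where the structure of both $F^{ext}$ and $\Psi$ enters.

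For the $C^\infty$ assertion, the plan is to invoke the standard fact that convolving a locally integrable function with a compactly supported $C^\infty$ function gives a $C^\infty$ output: formally, differentiation under the integral sign yields $\partial^\gamma G(x) = \int F^{ext}(y)\,\partial_x^\gamma \Psi(x-y)\,dy$ for every multi-index $\gamma$, and continuity of each such integral in $x$ is immediate from dominated convergence since $\partial^\gamma\Psi$ is bounded and compactly supported. The only preliminary check is that $F^{ext}$ is actually continuous (hence locally integrable) on $\R^d$; this follows from block-constancy of $F$ on $\R^d_\geq$, because on the walls $\{x_m = x_n\}$ the two possible choices of $\Sigma \in \sort(x)$ differ only by a transposition within a block, and block-constancy forces the definition \eqref{ext} to agree.

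For block-constancy of $G$, the key structural input is that $F^{ext}$ is point-symmetric (Proposition \ref{late}, applied componentwise) and that $\Psi$ is permutation-invariant: since $\Psi(z) = \psi(z_1)\cdots\psi(z_d)$ with the \emph{same} $\psi$ in every coordinate, one has $\Psi(\Pi z) = \Psi(z)$ for every permutation $\Pi$. I would then show that $G$ itself is point-symmetric via the change of variables $y = \Pi y'$ (which has unit Jacobian):
\begin{equation*}
G(\Pi x) = \int F^{ext}(y)\,\Psi(\Pi x - y)\,dy = \int F^{ext}(\Pi y')\,\Psi(\Pi(x - y'))\,dy' = \Pi \int F^{ext}(y')\,\Psi(x - y')\,dy' = \Pi G(x),
\end{equation*}
where the second equality substitutes, the third uses point-symmetry of $F^{ext}$ together with permutation-invariance of $\Psi$, and the final equality is the definition of $G$. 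Once point-symmetry of $G$ is established, block-constancy is automatic: if $x_m = x_n$ and $\Pi$ denotes the transposition of coordinates $m$ and $n$, then $\Pi x = x$, so $G(x) = G(\Pi x) = \Pi G(x)$, which is precisely the statement $G_m(x) = G_n(x)$.

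The main obstacle, such as there is one, is purely a bookkeeping point: one must verify that $F^{ext}$ is well-defined and continuous across the walls before invoking the standard convolution machinery, and one must keep straight the two distinct symmetries being combined (equivariance of $F^{ext}$ versus full permutation-invariance of $\Psi$). Neither point is deep, and no further ingredients beyond what the paper has already developed are required.
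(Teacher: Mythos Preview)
Your proposal is correct and follows essentially the same approach as the paper: both argue $C^\infty$ is standard, note the permutation-invariance of $\Psi$, and then establish point-symmetry of $G$ via the change of variables $y\mapsto\Pi y'$. The only cosmetic difference is that you invoke the point-symmetry of $F^{ext}$ directly from Proposition~\ref{late}, whereas the paper unwinds the definition of $F^{ext}$ in terms of $\Sigma_y^*F(\Sigma_y(\cdot))$ inside the integral; your version is slightly more streamlined but the content is identical.
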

\begin{proof}
That $G$ becomes $C^\infty$ is a standard fact whose proof we omit. Note that $\Psi$ is permutation invariant, i.e.~$\Psi(\Pi x)=\Psi(x)$ for all permutations $\Pi$. We fix $x$ and consider $y$ as a variable. Given any point $x-y$ the value of $F^{ext}(x-y)$ is given by $F^{ext}(x-y)=\Sigma_{y}^*F(\Sigma_y(x-y))$ with $\Sigma_y\in \sort(x-y)$, and since $F$ is block-constant this value is independent of the particular choice of $\Sigma_y$ in case of ambiguity (i.e.~when $x-y$ is on the boundary of $\R^d_{\geq}$). Let $\Pi$ be an arbitrary perturbation. Then \begin{align*}&\Pi^*G(\Pi x)=\int_{\R^d}\Pi^*F^{ext}(\Pi x-y)\Psi(y)dy=\int_{\R^d}\Pi^*F^{ext}(\Pi (x-\Pi^*y))\Psi(\Pi^* y)dy=\\&\int_{\R^d}\Pi^*F^{ext}(\Pi (x-y))\Psi( y)dy=\int_{\R^d}\Pi^*\Upsilon_{\Pi y}^*F(\Upsilon_{\Pi y}\Pi (x-y))\Psi( y)dy\end{align*}
where each $\Upsilon_{\Pi y}$ is such that $\Upsilon_{\Pi y}\Pi (x-y)\in\R^d_\geq$. By the comments before the computation, we can consider $\Upsilon_{\Pi y}\Pi$ as $\Sigma_y$ and it follows that $\Pi$ has no effect on the outcome, so the above computation equals $\int_{\R^d}\Sigma_{y}^*F(\Sigma_y (x-y))\Psi( y)dy=G(x)$, as desired.
\end{proof}

We now come to the final theorem of the paper. This type of results can also be proved using the convex theory of complex sub-stochastic matrices, see \cite{andersson-etal-pams-2016}.

\begin{theorem}\label{t2}
Assume that $F$ is block-constant on $\R^d_\geq$ and Lipschitz continuous. Then \begin{equation}\label{lip1}\|\mathcal{L}_F(A)-\mathcal{L}_F(B)\|_2\leq \|F\|_{Lip}\|A-B\|_2.\end{equation}
\end{theorem}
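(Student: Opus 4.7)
The plan is to prove the pointwise derivative bound $\|\mathcal{L}_F'(E)\|_2 \leq \|F\|_{Lip}\|E\|_2$ at every point where $F$ is $C^1$ point-symmetric, integrate along the line segment from $A$ to $B$ to obtain \eqref{lip1} in the smooth case, and finally reduce merely Lipschitz $F$ to the smooth case by mollification via Lemma \ref{rvq}.

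For smooth $F$, fix $A=U\Lambda_\alpha U^*$ and apply Theorem \ref{t1}. Since the Frobenius norm is unitarily invariant, it suffices to bound the norm of $M:=[F,\alpha]\circ\hat E+\diag((F'|_{\alpha})^o\hat e)$. Unpacking \eqref{Flambda}, the diagonal of $M$ is precisely $\diag(F'|_\alpha\hat e)$, while its off-diagonal entries are $[F,\alpha]({m,n})\,\hat E_{m,n}$. Hence
\begin{equation*}\|M\|_2^2=\|F'|_\alpha\hat e\|_2^2+\sum_{m\neq n}|[F,\alpha]({m,n})|^2|\hat E_{m,n}|^2.\end{equation*}
The first term is at most $\|F'|_\alpha\|^2\|\hat e\|_2^2\leq\|F\|_{Lip}^2\|\hat e\|_2^2$ since the operator norm of the Jacobian equals the pointwise Lipschitz constant, and the off-diagonal sum is at most $\|F\|_{Lip}^2\sum_{m\neq n}|\hat E_{m,n}|^2$ by Lemma \ref{rv}. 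Adding and using $\|\hat E\|_2=\|E\|_2$ gives $\|\mathcal{L}_F'(E)\|_2\leq\|F\|_{Lip}\|E\|_2$; integrating this bound along the segment $t\mapsto A+t(B-A)$ yields \eqref{lip1} in the smooth case.

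For the general case, I would fix a nonnegative $C^\infty$ function $\psi$ with compact support and $\int\psi=1$, form the scale-$\epsilon$ product density $\Psi_\epsilon$, and set $G_\epsilon=F^{ext}*\Psi_\epsilon$. By Lemma \ref{rvq}, $G_\epsilon$ is block-constant and $C^\infty$, hence $C^1$ point-symmetric. A one-line Minkowski estimate shows that convolution with a probability density does not increase the Lipschitz constant, so $\|G_\epsilon\|_{Lip}\leq\|F^{ext}\|_{Lip}$. The smooth case applied to $G_\epsilon$ gives $\|\mathcal{L}_{G_\epsilon}(A)-\mathcal{L}_{G_\epsilon}(B)\|_2\leq\|F^{ext}\|_{Lip}\|A-B\|_2$. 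Since $G_\epsilon\to F^{ext}$ uniformly on any compact set containing the spectra of $A$ and $B$, both sides converge to the corresponding quantities for $F$, and \eqref{lip1} follows after verifying $\|F^{ext}\|_{Lip}=\|F\|_{Lip}$.

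The main technical obstacle is this last identity. The inequality $\|F^{ext}\|_{Lip}\geq\|F\|_{Lip}$ is trivial. For the reverse direction, one uses that $\R^d$ is covered by the closed Weyl chambers $\{x:\Sigma x\in\R^d_\geq\}$ indexed by permutations $\Sigma$. On a single such chamber, the representation $F^{ext}(u)=\Sigma^*F(\Sigma u)$ together with $\Sigma$ being an isometry of $\R^d$ immediately gives the Lipschitz bound $\|F\|_{Lip}$. For arbitrary $x,y\in\R^d$, the segment $[x,y]$ crosses the hyperplane arrangement $\{x_i=x_j\}$ at finitely many points, producing a partition of $[x,y]$ into pieces each lying in a single closed chamber; the chamber bound on each piece and the triangle inequality telescope to the global bound $\|F^{ext}(x)-F^{ext}(y)\|_2\leq\|F\|_{Lip}\|x-y\|_2$. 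Continuity of $F^{ext}$ at the hyperplane crossings, which follows from block-constancy of $F$, ensures that no jumps spoil the telescoping argument.
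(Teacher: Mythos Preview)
Your argument is correct and follows the paper's strategy almost exactly: mollify via Lemma \ref{rvq}, establish the pointwise bound $\|\mathcal{L}_F'(E)\|_2\le\|F\|_{Lip}\|E\|_2$ by splitting the Daleskii--Krein formula into its diagonal part (controlled by $\|F'|_\alpha\|$) and off-diagonal part (controlled by Lemma \ref{rv}), and integrate along the segment. Two minor differences are worth noting. First, to justify differentiability of $\mathcal{L}_{G_\epsilon}$ along the entire segment, the paper instead perturbs $A$ to have simple eigenvalues and invokes Kato's analytic perturbation theory so that $\xi_t$ has distinct components for all but finitely many $t$; your route needs $G_\epsilon$ to be $C^1$ point-symmetric at \emph{every} point, and the bare conclusion ``block-constant and $C^\infty$'' of Lemma \ref{rvq} is not quite enough for that---you should cite its proof, which actually shows the stronger identity $\Pi^*G_\epsilon(\Pi x)=G_\epsilon(x)$, i.e.\ point-symmetry, whence $G_\epsilon^{ext}=G_\epsilon$ is $C^\infty$ everywhere. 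Second, the paper simply asserts that the mollification preserves the Lipschitz constant, whereas you supply the chamber-by-chamber argument for $\|F^{ext}\|_{Lip}=\|F\|_{Lip}$; this is a welcome addition rather than a deviation.
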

\begin{proof}
It is east to see that $F^{ext}$ is continuous. By Lemma \ref{rvq} and a standard approximation argument, we may assume that $F$ is $C^\infty$ on $\R^d$ with the same Lipschitz constant. Moreover a simple matrix approximation argument shows that we may assume that $A$ has only simple eigenvalues. Set $E=A-B$ and let $U_t\Lambda_{\xi_t}U_t^*$ be the spectral decomposition of $B+tE$. By the discussion in Chapter II, Sec 1.1 \cite{kato2013perturbation}, we know that $\xi_t$ has distinct components for all $t$ except finitely many. Since $C^1$ vector-fields are automatically $C^1$ point-symmetric at all vectors with distinct components, $F$ is $C^1$ point-symmetric for all $\xi_t$ except finitely many values of $t$. By simply ignoring these points and appealing to Theorem \ref{t1} we get that
\begin{equation*}\begin{aligned}&\|\mathcal{L}_F(A)-\mathcal{L}_F(B)\|_2=\left\|\int_0^1 \frac{d}{dt}\mathcal{L}_F(B+tE)dt\right\|_2\leq \\
&\int_0^1\left\| U_t\Big([F,\xi_t]\circ \hat E+\diag_{(F'|_{\xi_t})^o \hat{e}} \Big)U_t^*\right\|_2dt\leq \sup_{0\leq t\leq 1}\left\| U_t\Big([F,\xi_t]
\circ \hat E+\diag_{(F'|_{\xi_t})^o \hat{e}} \Big)U_t^*\right\|_2.\end{aligned}\end{equation*} Note that $\hat E=U_t E U_t^*$ implicitly depends on $t$. As noted in the proof of Theorem \ref{t1} the supindex $o$ can be moved from $(F'|_{\xi_t})^o$ to $[F,\xi_t]$. Thus we get \begin{equation*}\begin{aligned}&\left\|U_t\big([F,\xi_t]\circ \hat E+\diag_{(F'|_{\xi_t})^o \hat{e}}\big)U_t^*\right\|_2^2=
\left\|[F,\xi_t]^o\circ \hat E+\diag_{(F'|_{\xi_t}) \hat{e}}\right\|_2^2\leq\\&\sum_{i\neq j}|[F,\xi_t]_{i,j}\hat E_{i,j}|^2+\| F'|_{\xi_t}\hat e\|^2_2=
\sum_{i\neq j}\|F\|_{Lip}^2|\hat E_{i,j}|^2+\| F\|_{Lip}^2\|\hat e\|^2_2=\| F\|_{Lip}^2\|\hat E\|^2_2
\end{aligned}\end{equation*} where we used Lemma \ref{rv}. Since $\|\hat E\|_2=\| E\|_2=\|A-B\|_2$, the theorem follows by inserting this estimate in the above supremum.
\end{proof}

The assumption that $F$ is block-constant in all of $\R^d_\geq$ is crucial for the above result to be true. To see this consider the case when $\mathcal{L}_F$ is the orthogonal projection onto the first eigenspace and $n=2$ say. The matrices $\left(
                                          \begin{array}{cc}
                                            1\pm\varepsilon & 0 \\
                                            0 & 1\mp\varepsilon \\
                                          \end{array}
                                        \right)$
then show that $\mathcal{L}_F$ is not continuous, despite $F(x)=(1,0)$ being constant (and hence having Lipschitz constant 0).

\bibliographystyle{plain}
\bibliography{MCref}
\end{document}